\newtheorem{theorem}{Theorem}
\newtheorem{proposition}[theorem]{Proposition}
\newtheorem{lemma}[theorem]{Lemma}
\newtheorem*{question}{Question}
\newcommand{\Z}{\mathbb{Z}}
\newcommand{\C}{\mathbb{C}}
\newcommand{\Q}{\mathbb{Q}}
\def\co{\colon\thinspace}
\newtheorem*{rep@theorem}{\rep@title}
\newcommand{\newreptheorem}[2]{%
\newenvironment{rep#1}[1]{%
 \def\rep@title{#2 \ref{##1}}%
 \begin{rep@theorem}}%
 {\end{rep@theorem}}}
\begin{document}
\makeatletter
\providecommand\@dotsep{5}
\makeatother
\rhead{\thepage}
\lhead{\author}
\thispagestyle{empty}


\raggedbottom
\pagenumbering{arabic}
\setcounter{section}{0}


\title{Brieskorn spheres bounding rational balls}
\author{Selman Akbulut \and Kyle Larson}
\address{Michigan State University, East Lansing, Michigan}
\email{akbulut@math.msu.edu}
\address{Michigan State University, East Lansing, Michigan}
\email{larson@math.msu.edu}

\begin{abstract}
Fintushel and Stern showed that the Brieskorn sphere $\Sigma(2,3,7)$ bounds a rational homology ball, while its non-trivial Rokhlin invariant obstructs it from bounding an integral homology ball. It is known that their argument can be modified to show that the figure-eight knot is rationally slice, and we use this fact to provide the first additional examples of Brieskorn spheres that bound rational homology balls but not integral homology balls: the families $\Sigma(2,4n+1,12n+5)$ and $\Sigma(3,3n+1,12n+5)$ for $n$ odd. We also provide handlebody diagrams for a rational homology ball containing a rationally slice disk for the figure-eight knot, as well as for a rational homology ball bounded by $\Sigma(2,3,7)$. These handle diagrams necessarily contain 3-handles.
\end{abstract}
\maketitle


\begin{section}{Introduction}\label{introduction}


A classic question in low-dimensional topology asks which 3-dimensional integral homology spheres smoothly bound integral homology balls. Due to their nice properties, a reasonable starting point to address this question is to consider the Brieskorn homology spheres $\Sigma(p,q,r) = \{x^p + y^q+z^r = 0\} \cap S^5 \subset \C^3$, with $p,q,r$ positive and relatively prime.  A large number of Brieskorn spheres are known to bound integral homology balls (or even contractible 4-manifolds), for example see \cite{AK}, \cite{CH}, \cite{Stern}, \cite{FS3}, and \cite{Fickle}. One can weaken the above question to ask which integral homology spheres bound \emph{rational} homology balls; however, it turns out that this has not helped much in producing more examples. Indeed it appears to be a difficult problem to find Brieskorn spheres that bound rational homology balls but not integral homology balls. Fintushel and Stern \cite{FS} provided the first example by constructing a rational homology ball bounded by the Brieskorn sphere $\Sigma(2,3,7)$. Since $\Sigma(2,3,7)$ has Rokhlin invariant $\mu = 1$, it cannot bound an integral homology ball. In this note we give the first new examples.
Using the fact that the figure-eight knot is rationally slice (see Section~\ref{handles}), a simple observation shows that the $\mu = 1$ Brieskorn sphere $\Sigma(2,3,19)$ bounds a rational homology ball (Proposition \ref{2319}). More interesting are two infinite families of Brieskorn spheres.

\begin{theorem}\label{main}
The Brieskorn spheres $\Sigma(2,4n+1,12n+5)$ and $\Sigma(3,3n+1,12n+5)$ bound rational homology balls, and when $n$ is odd they have $\mu = 1$ and so do not bound integral homology balls.
\end{theorem}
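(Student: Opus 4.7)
The plan is to exhibit each Brieskorn sphere in these two families as a rational surgery on the figure-eight knot, and then leverage the rational sliceness of the figure-eight to construct rational homology balls bounded by these surgered 3-manifolds. Concretely, $\Sigma(p,q,r)$ admits a Seifert fibered structure and hence a standard plumbing surgery description; starting from the plumbing, I would perform Kirby moves to reduce to a simpler diagram. The specific arithmetic of the triples $(2,4n+1,12n+5)$ and $(3,3n+1,12n+5)$ strongly suggests that the reduced presentations are rational surgeries on the figure-eight knot with a coefficient depending on $n$, giving a uniform one-parameter diagrammatic reduction for each family.

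Given such a surgery description, and assuming the figure-eight knot $K$ bounds a smoothly embedded disk $D$ in a rational homology ball $B$ with $\partial B = S^3$, I would take $B$, attach a 2-handle along $K \subset \partial B$ with the framing dictated by the surgery description (plus auxiliary handles as needed to realize the rational slope), and call the result $W$. By construction $\partial W$ is the targeted Brieskorn sphere, and a Mayer-Vietoris computation should verify that $W$ is a rational homology ball: the second homology is generated by the closed surface obtained by capping $D$ off with the core of the 2-handle, and its self-intersection is controlled by the framing together with the rational linking data of $D$ in $B$.

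For the Rokhlin obstruction, I would compute $\mu(\Sigma(p,q,r)) \equiv \sigma(p,q,r)/8 \pmod{2}$, where $\sigma$ is the signature of the Milnor fiber, using either the Brieskorn lattice-point formula or the Neumann-Raymond formula via Dedekind sums. Specializing to the two families and restricting to odd $n$ should yield $\mu \equiv 1$ after a short arithmetic manipulation, thereby ruling out an integral homology ball filling.

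The main obstacle I anticipate is the first step: carrying out the Kirby calculus that converts the plumbing description of each Brieskorn sphere into a clean surgery diagram on the figure-eight knot. The common expression $12n+5$ across both families hints that the two reductions should be closely parallel, possibly sharing intermediate diagrams, but identifying the correct sequence of blowups, blowdowns, and handle slides at the parameter level is the real work. Once these surgery descriptions are in hand, the rational ball construction and the Rokhlin computation should proceed routinely.
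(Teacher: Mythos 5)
There is a genuine gap, and it is in the step you yourself flag as the main work: the infinite families $\Sigma(2,4n+1,12n+5)$ and $\Sigma(3,3n+1,12n+5)$ simply cannot be realized as rational surgeries on the figure-eight knot. The figure-eight is hyperbolic, and by Thurston its exceptional slopes are exactly the integers between $-4$ and $4$; every other slope yields a hyperbolic manifold, while Brieskorn spheres are Seifert fibered. Moreover, to get an integral homology sphere from a single knot in $S^3$ you would need coefficient $1/q$, and for $|q|\ge 2$ such surgeries on the figure-eight are hyperbolic, so the only Brieskorn sphere obtainable this way is $\Sigma(2,3,7)$ (slope $\pm 1$). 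What actually works — and what the paper does — is to realize each member of the families as an \emph{integral surgery on $Y$}, the $0$-surgery on the figure-eight, i.e.\ a two-component diagram consisting of the $0$-framed figure-eight together with an auxiliary integrally framed curve. This is found by Kirby calculus starting from the canonical negative definite plumbings of $\Sigma(2,5,17)$ and $\Sigma(3,4,17)$, and the whole families are then produced by an iterative blow-up that lengthens one chain of the plumbing while keeping a $-1$-framed curve providing the surgery to $Y$; no single-knot description exists or is needed.

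Your rational-ball construction also has a flaw as stated: attaching a $2$-handle to the rational ball $B$ along $K\subset \partial B=S^3$ produces a $4$-manifold with $b_2=1$ (the sphere $D\cup\{\text{core}\}$ generates $H_2$ with $\Q$-coefficients, since $H_2(B;\Q)=0$ and $K$ is null-homologous in $S^3$), so it is never a rational homology ball. The correct move, as in Lemma~\ref{construction}, is first to excise a neighborhood of the slice disk $D$, obtaining $C=B\setminus \nu D$ with the rational homology of $S^1\times D^3$ and boundary $Y$, and then attach the $2$-handle along the auxiliary surgery curve in $Y$; when the resulting boundary is an integral homology sphere, Mayer--Vietoris and the long exact sequence of the pair show the result is a rational ball. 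Your Rokhlin computation via the Milnor fiber signature is fine in principle (the paper instead reads off the Neumann--Siebenmann $\bar\mu$ from the negative definite plumbing, getting $\bar\mu=8$ for odd $n$), but the arithmetic is left undone, and without the corrected surgery descriptions the first two-thirds of the argument does not get off the ground.
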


The difficulty of this problem is related to handle decompositions of 4-manifolds. A simple homological argument shows that if an integral homology 3-sphere bounds a rational homology ball $X$ that is not an integral homology ball, then any handle decomposition of $X$ must contain 3-handles. Therefore the difficulty of working with handle decompositions with 3-handles points to the challenge and interest of finding these examples.

\vspace{.05in}

One of the main motivations for our work relates to the group of integral homology spheres $\Theta^3_\Z$ and the group of rational homology spheres $\Theta^3_\Q$. There is a canonical homomorphism $\psi \co \Theta^3_\Z \rightarrow \Theta^3_\Q$ induced by inclusion (see \cite{AL} for more discussion of this homomorphism), and the work of \cite{FS} can be interpreted as showing that the kernel of $\psi$ is non-trivial. Since $\Sigma(2,3,7)$ has infinite order in $\Theta^3_\Z$ we get that the kernel contains a subgroup isomorphic to $\Z$. Beyond this nothing is known of its structure, but it seems likely that the kernel is in fact much larger. Theorem \ref{main} gives a large collection of additional Brieskorn spheres that represent non-trivial elements in the kernel of $\psi$, but it is unknown if they are linearly independent in $\Theta^3_\Z$. Since Brieskorn spheres are often amenable to the computation of gauge and Floer theoretic invariants, it is possible that the following is a tractable question.

\begin{question}
Is some subset of the Brieskorn spheres $\Sigma(2,3,7)$, $\Sigma(2,3,19)$, $\Sigma(2,4n+1,12n+5)$, and $\Sigma(3,3n+1,12n+5)$ linearly independent in $\Theta^3_\Z$?
\end{question}

Now we outline the proof of Theorem \ref{main}. Recall that $\Sigma(2,3,7)$ can be obtained by $+1$-surgery on the figure-eight knot, and in fact the construction in \cite{FS} can be modified to show that the figure-eight knot is rationally slice, that is, bounds a smooth disk in a rational homology ball bounded by $S^3$ (see \cite{Cha}). This fact was apparently also known by Kawauchi\footnote{Kawauchi adds an extra algebraic condition in his definition of rationally slice and shows that the (2,1)-cable of the figure-eight knot satisfies this stronger condition. However, it is implicit in his argument that the figure-eight knot satisfies the weaker definition we use.} \cite{Kawauchi1}, whose argument moreover generalizes to show that all strongly negative-amphicheiral knots are rationally slice (see \cite{Kawauchi2}, \cite{KimWu}). In Section~\ref{handles} we give a handle proof that the figure-eight knot is rationally slice. Our proof uses the fact that the linking circle (meridian) of the 1-handle of $-W^{+}(0,2)$  is slice in $-W^{+}(0,2)$
(see p. 23 of \cite{Akbulut} for the notation).   
 
\vspace{.05in}

 If $Y$ denotes the 3-manifold obtained by 0-surgery on the figure-eight knot, it then follows that $Y$ bounds a 4-manifold with the rational homology of $S^1 \times D^3$. Hence any homology sphere obtained by integral surgery on $Y$ will bound a rational homology ball (the surgery corresponds to attaching a 2-handle to the rational homology $S^1 \times D^3$ which kills the non-torsion part of the homology). Theorem \ref{main} is then proved by showing that each $\Sigma(2,4n+1,12n+5)$ and $\Sigma(3,3n+1,12n+5)$ can be obtained by an integral surgery on $Y$. We do this in Section \ref{proof}.

\vspace{.05in}

In Section \ref{handles} we give handle diagrams for some of the relevant rational homology balls. In particular we give handle diagrams for a rational homology ball bounded by $\Sigma(2,3,7)$, and a rational homology ball bounded by $S^3$ showing a rationally slice disk for the figure-eight knot. While the arguments in \cite{FS} and \cite{Cha} are constructive, they do not give explicit handle diagrams for the rational homology balls they construct.

\subsection*{Acknowledgements} The second author thanks Paolo Aceto and Ron Stern for helpful conversations at the beginning of this project, and especially Ron Stern for suggesting Lemma \ref{construction} as a way to construct new examples.
\end{section}

\begin{section}{Proof of Theorem 1}\label{proof}

We start with the following lemma whose proof was outlined in Section \ref{introduction}. As before, let $Y$ denote 0-surgery on the figure-eight knot.

\begin{lemma}\label{construction}
Any integral homology sphere obtained by integral surgery on $Y$ bounds a rational homology ball.
\end{lemma}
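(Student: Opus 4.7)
The plan is to use the rational sliceness of the figure-eight knot $K_8$, established in Section~\ref{handles}, to produce a 4-manifold $W$ with $\partial W = Y$ and the rational homology of $S^1 \times D^3$, and then to attach a 2-handle to $W$ to obtain a rational homology ball bounded by $\Sigma$.

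First, let $D \subset B$ be a smoothly embedded disk in a rational homology 4-ball $B$ with $\partial B = S^3$ and $\partial D = K_8$, and set $W := B \setminus \Int(\nu(D))$. Its boundary is the result of Dehn surgery on $K_8$ with the framing induced by the trivialization of the normal bundle of $D$. To show this is the $0$-framing, form the closed surface $\Sigma_0 := D \cup_{K_8} F \subset B$, where $F$ is a Seifert surface for $K_8$ pushed slightly into the interior of $B$. The self-intersection $[\Sigma_0]^2$ equals the difference between the normal framing of $D$ and the Seifert framing of $K_8$; since $B$ is a rational homology 4-ball, $H_2(B;\Q) = 0$, so $[\Sigma_0]$ is torsion and hence $[\Sigma_0]^2 = 0$. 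Thus the normal framing is the Seifert framing and $\partial W = Y$. A standard computation (for instance using Mayer-Vietoris for $B = W \cup \nu(D)$) then shows $H_1(W;\Q) \cong \Q$, generated by the meridian of $D$, and $H_k(W;\Q) = 0$ for $k \geq 2$.

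Now let $\Sigma$ be an integer homology sphere obtained by integer-framed surgery on $Y$ along a knot $K' \subset Y$, and let $X := W \cup h^2$ with $h^2$ the corresponding 2-handle. Then $\partial X = \Sigma$. The hypothesis $H_1(\Sigma;\Z) = 0$ implies that $K'$ must generate $H_1(Y;\Z) \cong \Z$, i.e., $l := [K'] = \pm 1$. The long exact sequence of the pair $(X, W)$ with rational coefficients,
\[
0 \to H_2(X;\Q) \to H_2(X,W;\Q) \xrightarrow{\partial_*} H_1(W;\Q) \to H_1(X;\Q) \to 0,
\]
has $H_2(X,W;\Q) \cong \Q$ (generated by the core of $h^2$) and $\partial_*$ equal to multiplication by $l = \pm 1$. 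This map is an isomorphism, so $H_1(X;\Q) = H_2(X;\Q) = 0$, and higher rational homology vanishes trivially. Therefore $X$ is a rational homology ball bounded by $\Sigma$.

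The main subtlety is the framing identification $\partial W = Y$, which depends on the vanishing of $[\Sigma_0]^2$ forced by $B$ being a rational homology 4-ball; without this the boundary of $W$ would be a different Dehn surgery on $K_8$ and the whole construction would be misaligned. Once that is settled, the rest is routine bookkeeping with long exact sequences.
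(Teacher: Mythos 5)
Your proof is correct and follows essentially the same route as the paper's: excise a neighborhood of the rationally slice disk to get a rational homology $S^1 \times D^3$ bounded by $Y$, attach the surgery 2-handle, and verify the rational homology ball condition with exact sequences. The only difference is that you spell out details the paper leaves implicit, notably the verification that the disk induces the $0$-framing so that $\partial(B \setminus \nu D) = Y$.
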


\begin{proof}
Let $X$ be a rational homology ball with boundary $S^3$ such that the figure-eight knot bounds a smooth properly embedded disk $D$ in $X$ (for example, see Section \ref{handles}). Then the Mayer-Vietoris long exact sequence shows that $C:= X \setminus \nu D$ has the rational homology of $S^1 \times D^3$. An integral surgery on $Y$ corresponds to attaching to a 2-handle to $C$. If the resulting 3-manifold is an integral homology sphere, then the 4-manifold $W$ obtained by attaching the corresponding 2-handle to $C$ must be a rational homology ball, as can be seen from the Mayer-Vietoris long exact sequence and the long exact sequence of the pair $(W,\partial W)$.
\end{proof}

We remark that in the previous lemma we can use 0-surgery on any rationally slice knot, and not just the figure-eight knot.

\begin{figure}
\labellist
\pinlabel $n$ at 410 410
\pinlabel $0$ at 1140 50
\pinlabel $-1$ at 1250 750
\pinlabel $0$ at 1860 50
\pinlabel $-1$ at 1950 640
\endlabellist
\centering
\includegraphics[scale=0.15]{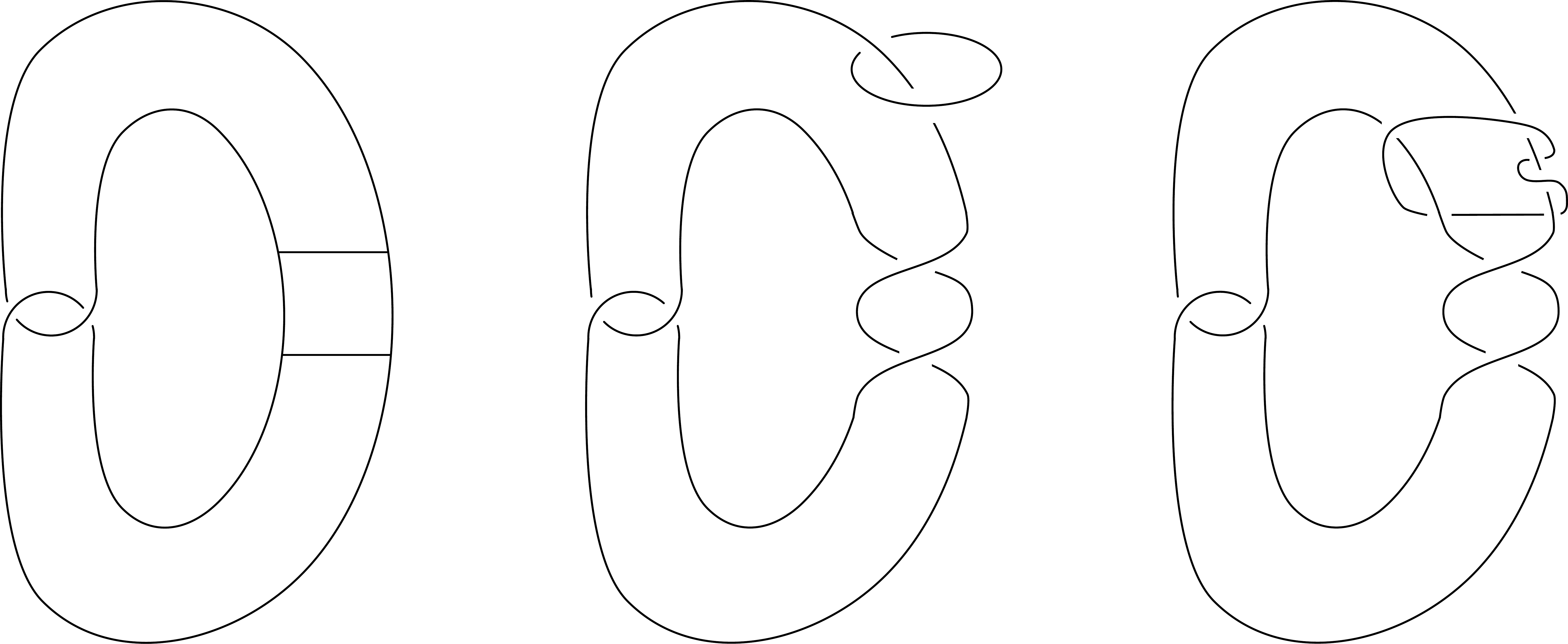}
\caption{On the left we have the knot $K_n$, where the box denotes $n$ full right-handed twists. In the middle is $\Sigma(2,3,7)$, and on the right is $\Sigma(2,3,19)$.}\label{twist}
\end{figure}

\begin{proposition}\label{2319}
$\Sigma(2,3,19)$ bounds a rational homology ball.
\end{proposition}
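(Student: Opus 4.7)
The plan is to apply Lemma \ref{construction}. To do this I need to exhibit $\Sigma(2,3,19)$ as the result of an integral surgery on $Y$, where $Y$ is $0$-surgery on the figure-eight knot. The rightmost diagram in Figure \ref{twist} is designed to provide exactly this presentation: a $2$-component framed link in $S^3$ consisting of the $0$-framed figure-eight together with a second knot of framing $-1$. The $0$-framed figure-eight component produces $Y$, while the second $-1$-framed 2-handle encodes an integer-framed surgery on a knot in $Y$.

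Given this setup, the argument has three steps. First, a short homological check: compute the determinant of the $2 \times 2$ linking matrix of the right-hand diagram and verify it equals $\pm 1$, confirming that the resulting 3-manifold is an integral homology sphere. Second, the substantive step, verify that this 3-manifold is in fact $\Sigma(2,3,19)$. Third, once these are established, Lemma \ref{construction} applies directly and produces the desired rational homology ball bounded by $\Sigma(2,3,19)$.

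The main obstacle is the identification in the second step. I would carry this out by Kirby calculus, sliding the $-1$-framed component over (or with) the $0$-framed figure-eight, possibly introducing and then cancelling $\pm 1$-framed unknots, until the diagram is converted into a recognizable presentation of $\Sigma(2,3,19)$, for example the negative-definite plumbing graph coming from the resolution of the singularity $x^2+y^3+z^{19}=0$, or the realization of $\Sigma(2,3,19)$ as $\mp 1/3$-surgery on a trefoil. The natural model is the middle diagram of Figure \ref{twist}, which identifies with $\Sigma(2,3,7)$ by a single slam-dunk of the $-1$-framed meridian into the $0$-framed figure-eight, recovering the classical fact that $+1$-surgery on the figure-eight is $\Sigma(2,3,7)$. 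The right-hand diagram differs by having the $-1$-framed component wind nontrivially around the figure-eight, so the slam-dunk is replaced by a more elaborate sequence of slides; carrying this sequence through is the bulk of the proof.
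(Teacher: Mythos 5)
Your overall strategy coincides with the paper's: realize $\Sigma(2,3,19)$ as an integral surgery on $Y$ via the right-hand diagram of Figure~\ref{twist} and then invoke Lemma~\ref{construction}. The problem is that your second step --- actually identifying the surgered manifold with $\Sigma(2,3,19)$ --- is the entire content of the proposition beyond Lemma~\ref{construction}, and you leave it as an unexecuted plan (``carrying this sequence through is the bulk of the proof''). As written this is a genuine gap: nothing in your argument pins down which $3$-manifold the right-hand diagram describes, and the route you sketch (sliding until you reach the negative-definite plumbing or a $1/q$-surgery description on a trefoil) is considerably heavier than what is needed.

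The paper closes this gap with one standard input that you do not use: $\Sigma(2,3,6n+1)$ is $+1$-surgery on the twist knot $K_n$ (Figure~\ref{twist}, left; $K_1$ is the figure-eight), as in \cite{Saveliev}. With that fact in hand the identification is a single blowdown: in the right-hand picture the $-1$-framed component is arranged so that blowing it down turns the $0$-framed figure-eight into $K_3$ with framing $+1$, hence into $\Sigma(2,3,19)$; the same move on the middle picture gives $+1$-surgery on $K_1$, i.e.\ $\Sigma(2,3,7)$ (your ``slam-dunk of a meridian'' description of the middle picture is the same move in that special case). So no long slide sequence or reduction to a plumbing is required. Note also that your first step, computing the determinant of the linking matrix, is superfluous: once the manifold is identified as a Brieskorn sphere it is automatically an integral homology sphere, which is all Lemma~\ref{construction} requires.
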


\begin{proof}
Using Lemma \ref{construction} it suffices to show that $\Sigma(2,3,19)$ can be obtained by an integral surgery on $Y$. The Brieskorn sphere $\Sigma(2,3,6n+1)$ admits a surgery description as $+1$-surgery on the twist knot $K_n$ defined as in Figure \ref{twist} (for this and plumbing descriptions of Brieskorn spheres see \cite{Saveliev}). Note that $K_1$ is the figure-eight knot. Blowing down the $-1$-framed components in the middle and right pictures of Figure \ref{twist} results in $+1$-surgery on $K_1$ and $K_3$, respectively, showing that $\Sigma(2,3,7)$ and $\Sigma(2,3,19)$ bound rational homology balls.
\end{proof}

\begin{proof}[Proof of Theorem \ref{main}]
For the families $\Sigma(2,4n+1,12n+5)$ and $\Sigma(3,3n+1,12n+5)$ we use the dual approach, giving integral surgeries from their canonical negative definite plumbings to $Y$. For $\Sigma(2,4n+1,12n+5)$ these plumbings take the form
\[
\xygraph{
!{<0cm,0cm>;<1cm,0cm>:<0cm,1cm>::}
!~-{@{-}@[|(2.5)]}
!{(0,1.5) }*+{\bullet}="x"
!{(1.5,3) }*+{\bullet}="a1"
!{(4.5,3) }*+{\bullet}="a2"
!{(1.5,0) }*+{\bullet}="c1"
!{(1.5,1.5) }*+{\bullet}="b1"
!{(3,1.5) }*+{\bullet}="b2"
!{(6,1.5) }*+{\bullet}="b3"
!{(3,3) }*+{\bullet}="am"
!{(4.5,1.5) }*+{\dots}="bm"
!{(4.5,2.5)}*+{\overbrace{\hphantom{--------}}^{n-1}}
!{(1.5,1.9) }*+{-5}
!{(3,1.9) }*+{-2}
!{(6,1.9) }*+{-2}
!{(0,1.9) }*+{-1}
!{(1.5,3.4) }*+{-4}
!{(4.5,3.4) }*+{-3}
!{(1.5,0.4) }*+{-2}
!{(3.1,3.4) }*+{-n-1}
"x"-"c1"
"x"-"a1"
"x"-"b1"
"b1"-"b2"
"a1"-"am"
"b2"-"bm"
"a2"-"am"
"b3"-"bm"
},
\]
and for $\Sigma(3,3n+1,12n+5)$ we have
\[
\xygraph{
!{<0cm,0cm>;<1cm,0cm>:<0cm,1cm>::}
!~-{@{-}@[|(2.5)]}
!{(0,1.5) }*+{\bullet}="x"
!{(1.5,3) }*+{\bullet}="a1"
!{(4.5,3) }*+{\bullet}="a2"
!{(1.5,0) }*+{\bullet}="c1"
!{(1.5,1.5) }*+{\bullet}="b1"
!{(3,1.5) }*+{\bullet}="b2"
!{(6,1.5) }*+{\bullet}="b3"
!{(3,3) }*+{\bullet}="am"
!{(4.5,1.5) }*+{\dots}="bm"
!{(4.5,2.5)}*+{\overbrace{\hphantom{--------}}^{n-1}}
!{(1.5,1.9) }*+{-4}
!{(3,1.9) }*+{-2}
!{(6,1.9) }*+{-2}
!{(0,1.9) }*+{-1}
!{(1.5,3.4) }*+{-3}
!{(4.5,3.4) }*+{-4}
!{(1.5,0.4) }*+{-3}
!{(3.1,3.4) }*+{-n-1}
"x"-"c1"
"x"-"a1"
"x"-"b1"
"b1"-"b2"
"a1"-"am"
"b2"-"bm"
"a2"-"am"
"b3"-"bm"
}.
\]

When $n=1$ we get $\Sigma(2,5,17)$ and $\Sigma(3,4,17)$. Surgery diagrams for their canonical negative definite plumbings appear as the gray components in Figure \ref{2517} and Figure \ref{3417}, and the black component gives the necessary surgery to $Y$. This can be seen from a straightforward sequence of blowdowns which we leave to the reader.

Now we describe an iterative procedure to obtain the whole families. Starting with the plumbing for either $\Sigma(2,5,17)$ or $\Sigma(3,4,17)$, in Figure \ref{2517} or Figure \ref{3417} we can blow up to unlink the black $-1$-framed component from the $-2$-framed component. This is demonstrated in Figure \ref{iterate}. The result is to lower the framing of the $-2$-framed component to $-3$, and the previous surgery curve becomes a $-2$-framed component in the bottom chain. If we forget about the newly introduced $-1$-framed unknot we see a plumbing for the $n=2$ case, and the $-1$-framed unknot again provides the required surgery to $Y$ since blowing up does not change the boundary 3-manifold. It is clear that we can keeping blowing up in this fashion, each time adding a $-2$-framed component to the bottom chain and decreasing the framing on the appropriate component in the upper-right chain by 1. Hence if we start with $\Sigma(2,5,17)$, blowing up in this way will generate the family $\Sigma(2,4n+1,12n+5)$, and starting with $\Sigma(3,4,17)$ generates $\Sigma(3,3n+1,12n+5)$. In each case the $-1$-framed unknot coming from the blow up provides the surgery to $Y$, and so by Lemma \ref{construction} these Brieskorn spheres bound rational homology balls.

It is easy to compute the Neumann-Siebenmann $\bar{\mu}$ invariant for our examples from the diagrams of their canonical negative definite plumbings, as described for example in \cite{Neumann}. The plumbings have signature $-5-n$, and when $n$ is odd the square of their spherical Wu class is $-13-n$. Hence for odd $n$ these Brieskorn spheres have $\bar{\mu}=(-5-n)-(-13-n)=8$. Since $\mu = \frac{\bar{\mu}}{8} \mod(2)$, we see that these examples have nontrivial Rokhlin invariant.


\end{proof}

\begin{figure}
\labellist
\pinlabel $-2$ at 90 280
\pinlabel $-1$ at 210 280
\pinlabel $-4$ at 330 280
\pinlabel $-2$ at 450 280
\pinlabel $-3$ at 570 280
\pinlabel $-5$ at 90 70
\pinlabel $-1$ at 330 65
\endlabellist
\centering
\includegraphics[scale=0.3]{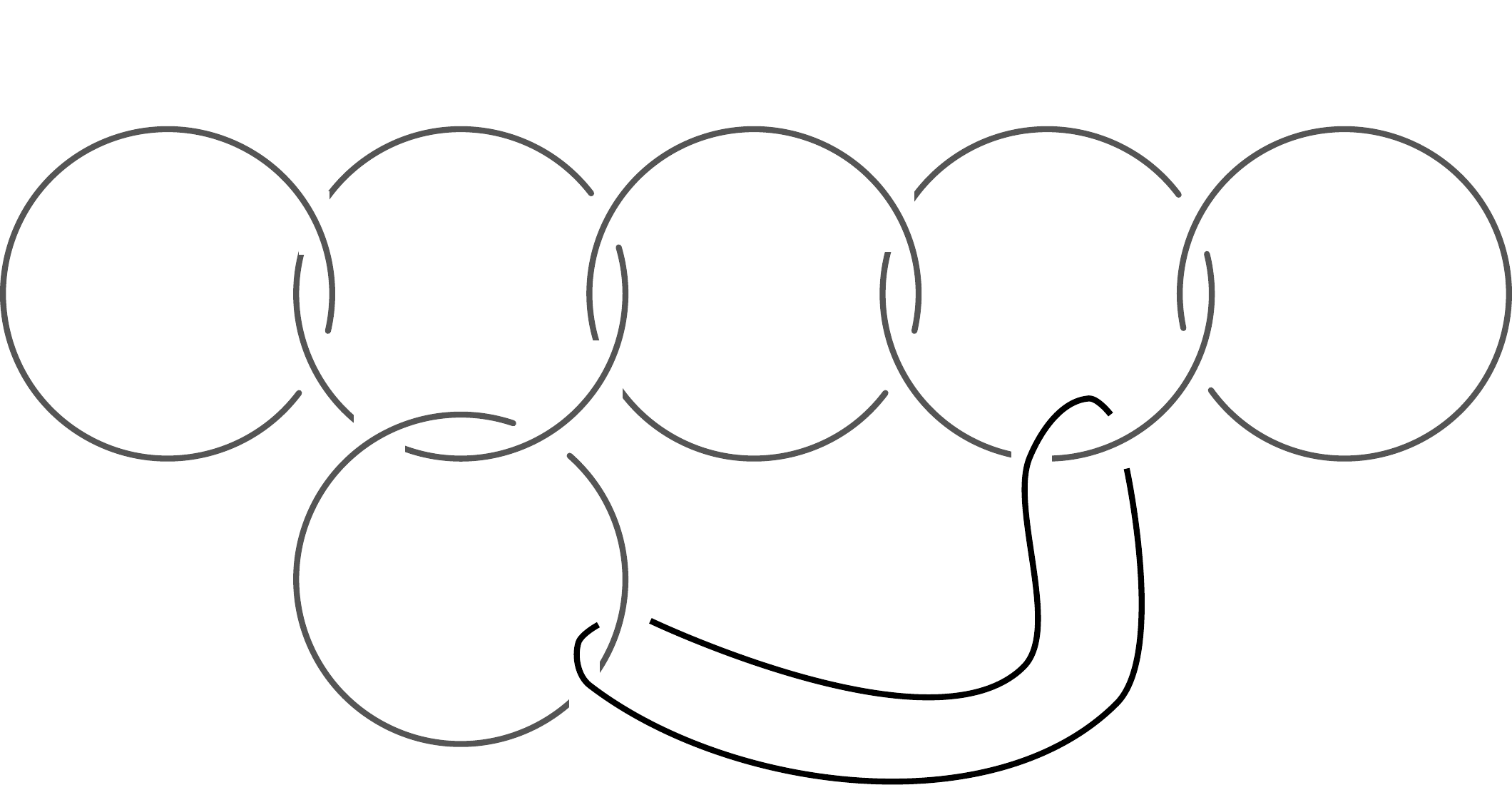}
\caption{An integral surgery from $\Sigma(2,5,17)$ to $Y$.}\label{2517}
\end{figure}

\begin{figure}
\labellist
\pinlabel $-3$ at 90 280
\pinlabel $-1$ at 210 280
\pinlabel $-3$ at 330 280
\pinlabel $-2$ at 450 280
\pinlabel $-4$ at 570 280
\pinlabel $-4$ at 90 70
\pinlabel $-1$ at 330 65
\endlabellist
\centering
\includegraphics[scale=0.3]{plumbing2}
\caption{An integral surgery from $\Sigma(3,4,17)$ to $Y$.}\label{3417}
\end{figure}

\begin{figure}
\labellist
\pinlabel $-2$ at 330 480
\pinlabel $-3$ at 450 695
\pinlabel $-3$ at 450 374
\pinlabel $-1$ at 495 530
\pinlabel $-2$ at 90 60
\pinlabel $-1$ at 330 65
\endlabellist
\centering
\includegraphics[scale=0.3]{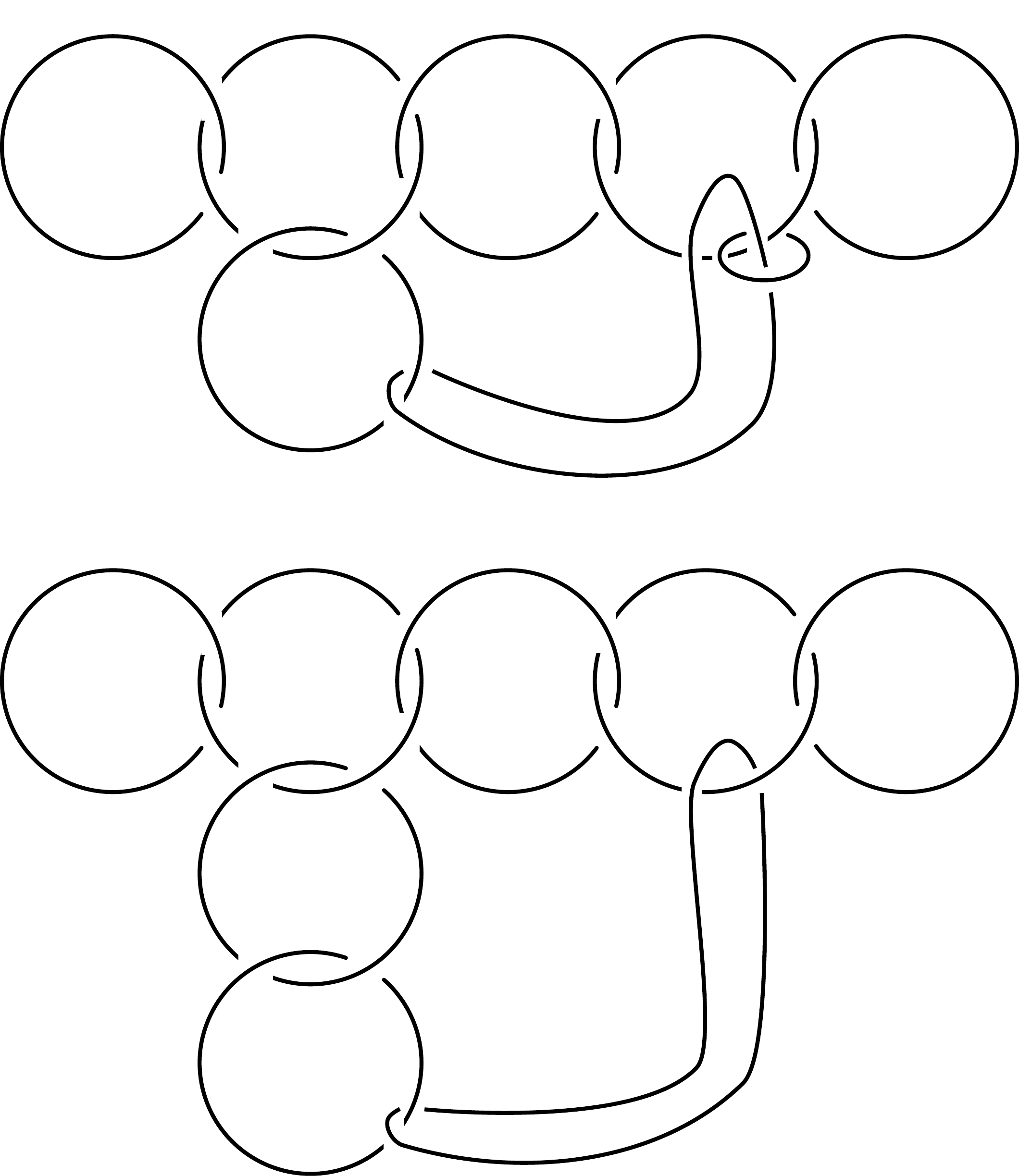}
\caption{Blowing up in the diagram.}\label{iterate}
\end{figure}

\end{section}

\begin{section}{Handle diagrams}\label{handles}

Here we will first construct a rational ball $W_{0}$ with boundary $S^3$ where the figure-eight knot $K$ is slice.  Then by blowing down this slice disk we will get a specific handlebody of a rational ball $W$ which $\Sigma(2,3,7)$ bounds. We start with the 
figure-eight knot in $S^{3}=\partial B^{4}$, drawn as the yellow curve in Figure~\ref{r1}. Then we attach a canceling $2/3$ handle pair to $B^4$. After this, we apply the obvious boundary diffeomorphisms to get the last picture of Figure~\ref{r1}. Then we go from Figure~\ref{r1} to Figure~\ref{r2} by applying the local diffeomorphism described in Figure~\ref{r1a}. This brings us to the first picture of Figure~\ref{r2}. Then an isotopy and handle slide (indicated by the dotted arrow) and turning a zero framed $2$-handle to a dotted circle gives us the last picture of Figure~\ref{r2}, which is a rational ball and the figure-eight knot $K$ (drawn in yellow color) is obviously slice there. Then by blowing down this slice $K$,  in Figure~\ref{r3} with $+1$-framing, gives a rational ball $W$  which $\Sigma(2,3,7)$ bounds. The notation in Figure~\ref{r3} means that everything going through $K$ is twisted by a $-1$-twist. From the picture we see that $W$ has one $1$-handle, two $2$-handles, and one $3$-handle.

	\begin{figure}[ht]  \begin{center}
		\includegraphics[width=.6\textwidth]{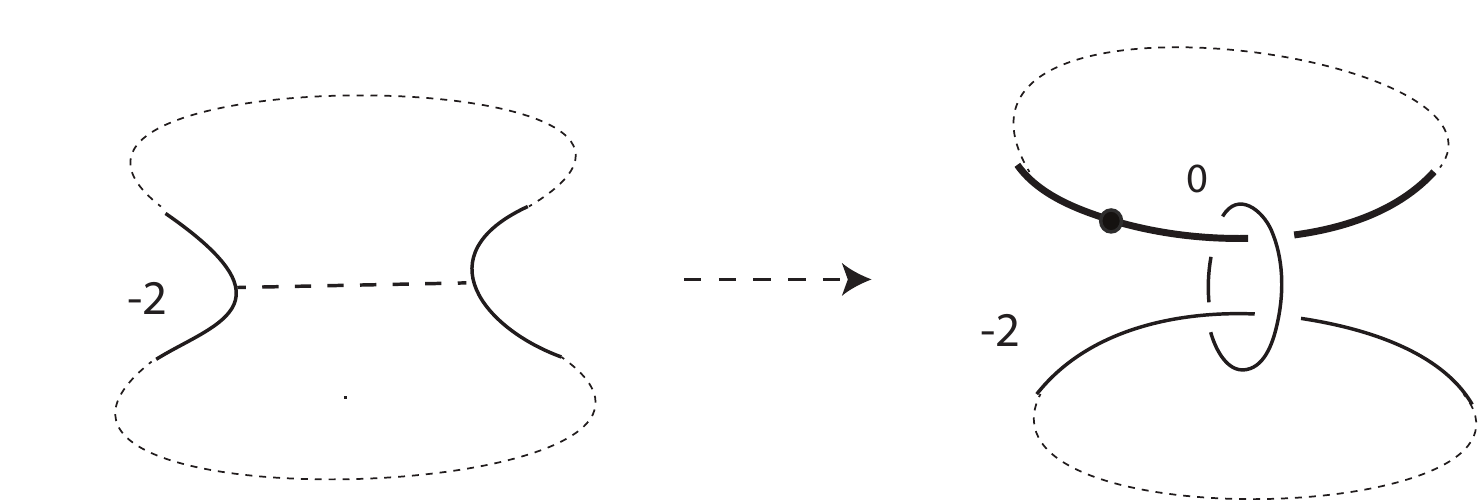}       
		\caption{}      \label{r1a} 
	\end{center}
\end{figure}	

	\begin{figure}[ht]  \begin{center}
			\includegraphics[width=1\textwidth]{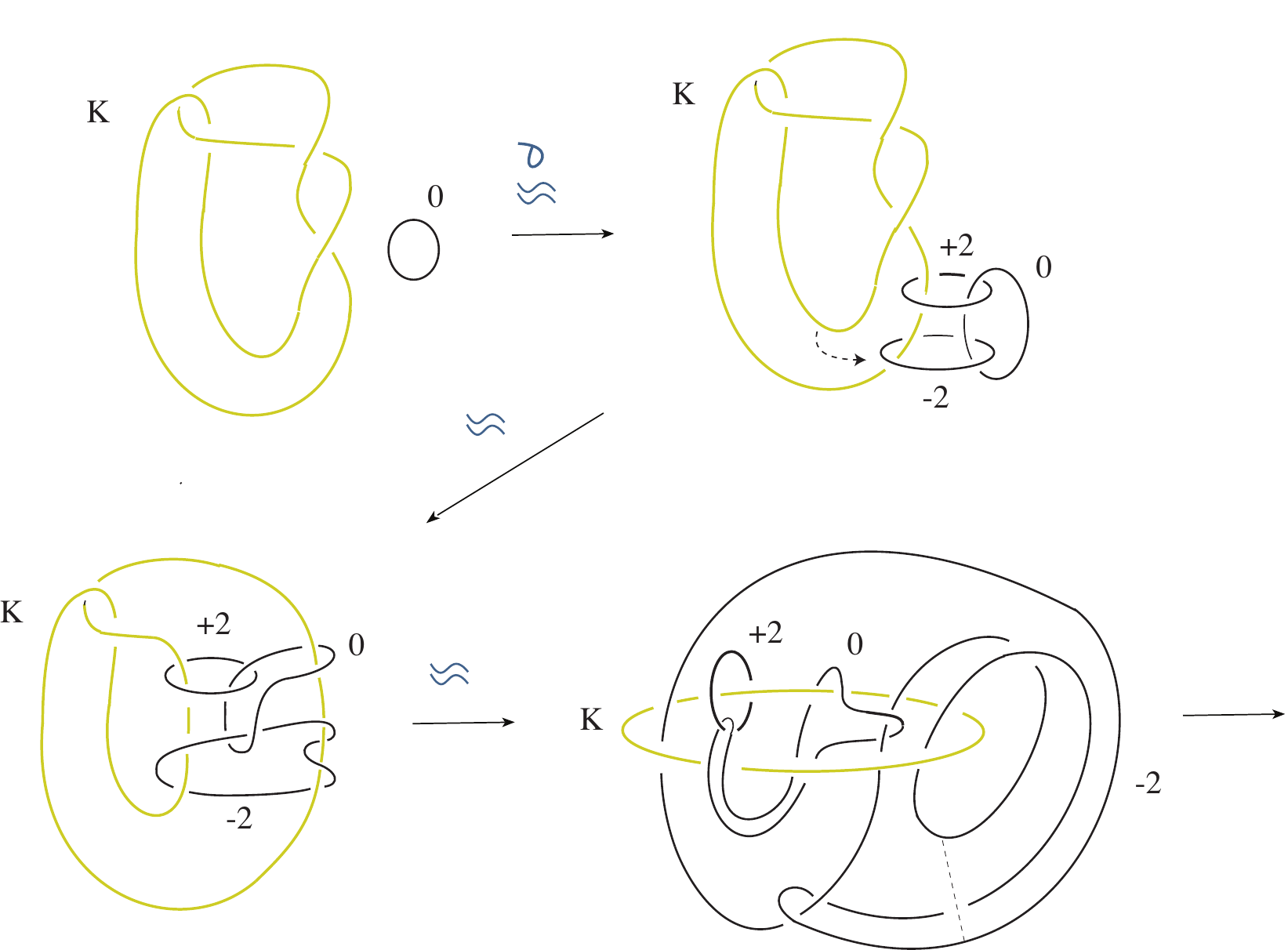}       
			\caption{}      \label{r1} 
		\end{center}
	\end{figure}
	
	\begin{figure}[ht]  \begin{center}
		\includegraphics[width=1\textwidth]{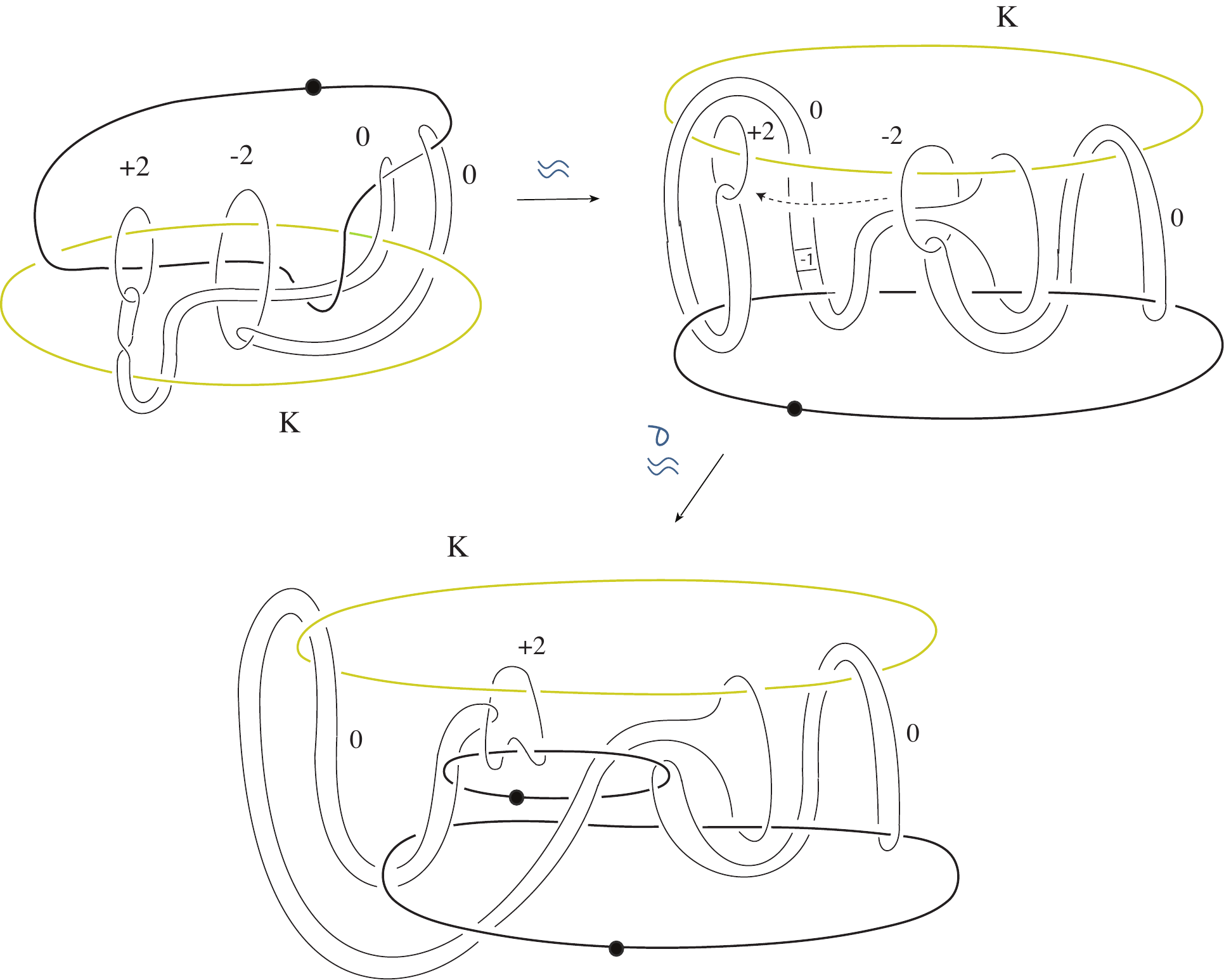}       
		\caption{$W_{0}$}      \label{r2} 
	\end{center}
\end{figure}

\begin{figure}[ht]  \begin{center}
		\includegraphics[width=.61\textwidth]{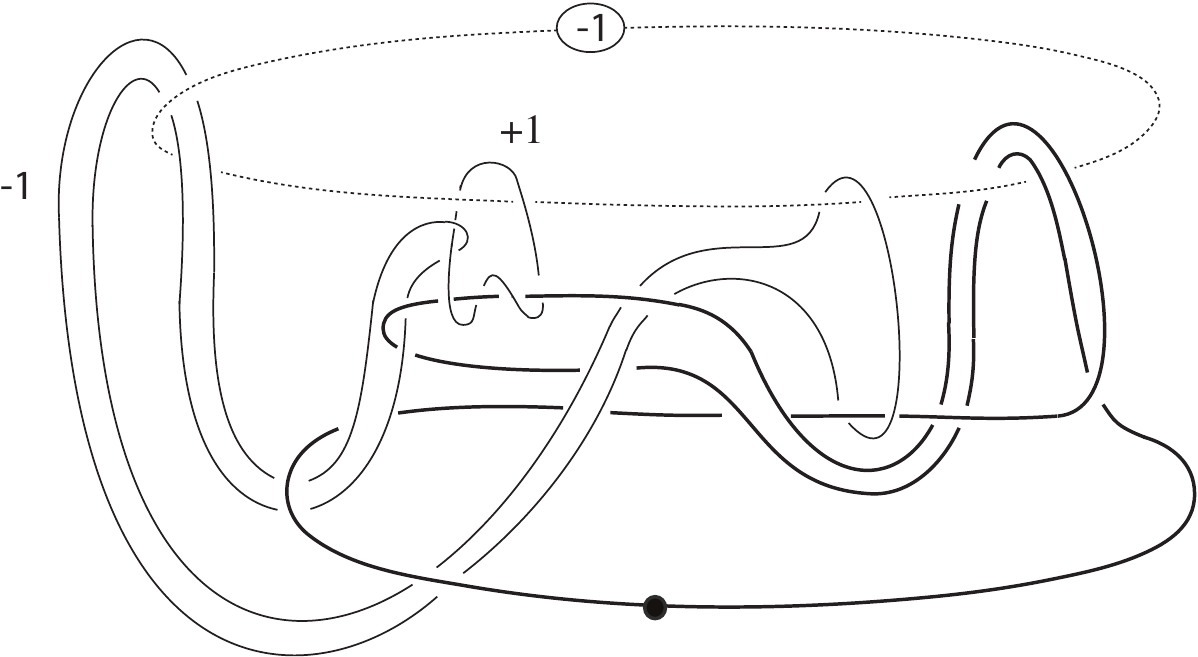}       
		\caption{W}      \label{r3} 
	\end{center}
\end{figure}

\end{section}


\bibliographystyle{amsalpha}
\bibliography{rational_balls.bib}

\providecommand{\bysame}{\leavevmode\hbox to3em{\hrulefill}\thinspace}
\providecommand{\MR}{\relax\ifhmode\unskip\space\fi MR }
\providecommand{\MRhref}[2]{%
  \href{http://www.ams.org/mathscinet-getitem?mr=#1}{#2}
}
\providecommand{\href}[2]{#2}
\begin{thebibliography}{Kaw09}

\bibitem[AK79]{AK}
Selman Akbulut and Robion Kirby, \emph{Mazur manifolds}, Michigan Math. J.
  \textbf{26} (1979), no.~3, 259--284. \MR{544597}

\bibitem[Akb16]{Akbulut}
Selman Akbulut, \emph{$4$-manifolds}, vol.~25, Oxford University Press, 2016.

\bibitem[AL16]{AL}
Paolo Aceto and Kyle Larson, \emph{Knot concordance and homology sphere
  groups}, To appear in \emph{Int. Math. Res. Not. IMRN}, 2016.

\bibitem[CH81]{CH}
Andrew~J. Casson and John~L. Harer, \emph{Some homology lens spaces which bound
  rational homology balls}, Pacific J. Math. \textbf{96} (1981), no.~1, 23--36.
  \MR{634760}

\bibitem[Cha07]{Cha}
Jae~Choon Cha, \emph{The structure of the rational concordance group of knots},
  Mem. Amer. Math. Soc. \textbf{189} (2007), no.~885, x+95. \MR{2343079
  (2009c:57007)}

\bibitem[Fic84]{Fickle}
Henry~Clay Fickle, \emph{Knots, {${\bf Z}$}-homology {$3$}-spheres and
  contractible {$4$}-manifolds}, Houston J. Math. \textbf{10} (1984), no.~4,
  467--493. \MR{774711}

\bibitem[FS81]{FS3}
Ronald Fintushel and Ronald~J. Stern, \emph{An exotic free involution on
  {$S^{4}$}}, Ann. of Math. (2) \textbf{113} (1981), no.~2, 357--365.
  \MR{607896}

\bibitem[FS84]{FS}
\bysame, \emph{A {$\mu$}-invariant one homology {$3$}-sphere that bounds an
  orientable rational ball}, Four-manifold theory ({D}urham, {N}.{H}., 1982),
  Contemp. Math., vol.~35, Amer. Math. Soc., Providence, RI, 1984,
  pp.~265--268. \MR{780582 (86f:57013)}

\bibitem[Kaw80]{Kawauchi1}
Akio Kawauchi, \emph{The (2,1)-cable of the figure eight knot is rationally
  slice}, handwritten manuscript, 1980.

\bibitem[Kaw09]{Kawauchi2}
\bysame, \emph{Rational-slice knots via strongly negative-amphicheiral knots},
  Commun. Math. Res. \textbf{25} (2009), no.~2, 177--192. \MR{2554510}

\bibitem[KW16]{KimWu}
Min~Hoon Kim and Zhongtao Wu, \emph{On rational sliceness of {M}iyazaki's
  fibered, −amphicheiral knots}, Preprint available at
  https://arxiv.org/abs/1604.04870, 2016.

\bibitem[Neu80]{Neumann}
Walter~D. Neumann, \emph{An invariant of plumbed homology spheres}, Topology
  {S}ymposium, {S}iegen 1979 ({P}roc. {S}ympos., {U}niv. {S}iegen, {S}iegen,
  1979), Lecture Notes in Math., vol. 788, Springer, Berlin, 1980,
  pp.~125--144. \MR{585657}

\bibitem[Sav02]{Saveliev}
Nikolai Saveliev, \emph{Invariants for homology {$3$}-spheres}, Encyclopaedia
  of Mathematical Sciences, vol. 140, Springer-Verlag, Berlin, 2002,
  Low-Dimensional Topology, I. \MR{1941324}

\bibitem[Ste78]{Stern}
Ronald~J. Stern, \emph{Some {B}rieskorn spheres which bound contractible
  manifolds}, Notices Amer. Math. Soc \textbf{25} (1978), A448.

\end{thebibliography}

\end{document}